%
%
%
%
\documentclass{amsart}
\usepackage[latin1]{inputenc}
\usepackage{epsfig}
\usepackage{color}
\usepackage[british,english]{babel}
\usepackage{amsthm}
\usepackage{amsmath}
\usepackage{amsfonts}
\usepackage{amssymb}
 \textwidth=6in \textheight=8.5in \topmargin=-0.5cm
 \oddsidemargin=0.5cm \evensidemargin=0.5cm
\usepackage{graphicx}

\newtheorem{corollary}{Corollary}[section]

\newtheorem{lemma}[corollary]{Lemma}
\newtheorem{prp}[corollary]{Proposition}

\newtheorem{thm}[corollary]{Theorem}

\newfont{\sBlackboard}{msbm10 scaled 900}

\newcommand{\mylabel}[1]{\label{#1}
            \ifx\undefined\stillediting
            \else \fbox{$#1$}\fi }
\newcommand{\BE}{\begin{equation}}

\newcommand{\EEQ}{\end{equation}}
\newcommand{\rfb}[1]{\mbox{\rm
   (\ref{#1})}\ifx\undefined\stillediting\else:\fbox{$#1$}\fi}

\newfont{\Blackboard}{msbm10 scaled 1200}

\newfont{\roma}{cmr10 scaled 1200}
\def\RR{{\mathbb R}}

\newcommand{\mm}    {{\hbox{\hskip 0.5pt}}}

\newcommand{\bluff} {{\hbox{\raise 15pt \hbox{\mm}}}}

scaled\magstep2

\makeatletter
\def\section{\@startsection {section}{1}{\z@}{-3.5ex plus -1ex minus
    -.2ex}{2.3ex plus .2ex}{\large\bf}}
\makeatother
\def\be{\begin{equation}}
\def\ee{\end{equation}}

\begin{document}
\title[Nonuniformly elliptic problems]{Low perturbations for a class of \\ nonuniformly elliptic problems}
\author[A. Bahrouni]{Anouar Bahrouni}
\address[A. Bahrouni]{Mathematics Department, University of Monastir,
Faculty of Sciences, 5019 Monastir, Tunisia} \email{\tt
bahrounianouar@yahoo.fr}
\author[D.D. Repov\v{s}]{Du\v{s}an D. Repov\v{s}}
\address[D.D. Repov\v{s}]{Faculty of Education and Faculty of Mathematics and Physics, University of Ljubljana, \& Institute of Mathematics, Physics and Mechanics, 1000 Ljubljana, Slovenia}
\email{\tt dusan.repovs@guest.arnes.si}
\begin{abstract} We introduce and study a new functional which was motivated by our paper
 on the Caffarelli-Kohn-Nirenberg inequality with variable exponent
 (Bahrouni, R\u adulescu \& Repov\v{s}, Nonlinearity 31 (2018), 1518-1534). We also study the eigenvalue problem for equations involving this new functional.
\end{abstract}
\keywords{Caffarelli-Kohn-Nirenberg inequality, eigenvalue problem, critical point theorem, generalized Lebesgue-Sobolev space, Luxemburg norm.\\
\phantom{aa} {\it 2010 Math. Subj. Classif.}: Primary 35J60,
Secondary 35J91, 58E30}
\maketitle

\section{Introduction}

The Caffarelli-Kohn-Nirenberg inequality plays an important role in
studying various problems of mathematical physics, spectral theory,
analysis of linear and nonlinear PDEs, harmonic analysis, and
stochastic analysis. We refer to 
Chaudhuri \& Ramaswamy~\cite{ad},
Baroni, Colombo \& Mingione~\cite{mingi1},
Colasuonno \& Pucci~\cite{patrizia1},
and
Colombo \& Mingione~\cite{mingi2}
for
relevant applications of the Caffarelli-Kohn-Nirenberg inequality.

 Let $\Omega\subset\RR^N$ ($N\geq 2$) be a bounded domain with smooth
boundary.  The following Caffarelli-Kohn-Nirenberg inequality
(see Caffarelli, Kohn \& Nirenberg~\cite{caf}) establishes that given $p\in(1,N)$ and real numbers
$a,\,b$, and $q$ such that
$$-\infty<a<\frac{N-p}{p},\;\;\; a\leq b\leq a+1,\;\;\; q=\frac{Np}{N-p(1+a-b)}\,,$$
 there is a positive constant $C_{a,b}$ such that
 for every  $u\in C_c^1(\Omega)$,
\begin{equation}\label{CKNineq}\left(\int_{\Omega}|x|^{-bq}|u|^q\;dx\right)^{p/q}\leq C_{a,b}\int_{\Omega}|x|^{-ap}|\nabla u|^p\;dx\,.\end{equation}

This inequality has been extensively studied (see, e.g.
 Abdellaoui \& Peral~\cite{abdellaui},
Chaudhuri \& Ramaswamy~\cite{ad},
Bahrouni, R\u adulescu \& Repov\v{s}~\cite{annon},
Catrina \& Wang~\cite{wang},
and
Mih\u ailescu,  R\u adulescu \& Stancu~\cite{mh},
 and the references therein). 

In
particular,   Bahrouni, R\u adulescu \& Repov\v{s}~\cite{annon}  gives a new version of
the Caffarelli-Kohn-Nirenberg inequality with variable exponent.
The next theorem is proved under the following assumptions:
let $\Omega\subset\RR^N$ ($N\geq 2$) be
a bounded
domain with smooth boundary and
suppose  that
the following
 hypotheses 
 are satisfied
 \smallskip

(A) $a:\overline{\Omega}\rightarrow\RR$ is a function of class $C^{1}$ and there exist 
$x_{0}\in \Omega$, 
$r>0$, 
and 
 $s \in (1,+\infty)$ 
 such that:
 \begin{enumerate}
\item  $|a(x)|\neq 0,$
 for every 
 $ x\in \overline{\Omega}\setminus \{x_{0}\};$
 \item
 $ |a(x)|\geq |x-x_{0}|^{s},$
 for every 
 $ x\in B(x_{0},r);$
\end{enumerate}

(P) $p:\overline{\Omega}\rightarrow\RR$ is a function of class $C^{1}$ 
and
 $2<p(x)<N$ 
 for every 
$x\in\Omega.$ 
\begin{thm}\label{cafan}{\rm(see {Bahrouni, R\u adulescu \& Repov\v{s}~\cite{annon})}}
 Suppose
 that hypotheses $(A)$
and $(P)$ are satisfied. Let  $\Omega\subset\RR^N$ ($N\geq 2$) be a bounded
domain with smooth boundary. Then there exists a positive constant
$\beta$ such that
\begin{align*}
\displaystyle \int_{\Omega}|a(x)|^{p(x)}|u(x)|^{p(x)}dx &\leq \beta
 \int_{\Omega}|a(x)|^{p(x)-1}||\nabla a(x)||u(x)|^{p(x)}dx\\
&+\beta\left(\displaystyle \int_{\Omega}|a(x)|^{p(x)}|\nabla
u(x)|^{p(x)}dx+\displaystyle \int_{\Omega}|a(x)|^{p(x)}|\nabla p(x)|
|u(x)|^{p(x)+1}dx\right)\\
&+\beta \displaystyle \int_{\Omega}|a(x)|^{p(x)-1}|\nabla p(x)|
|u(x)|^{p(x)-1}dx.
\end{align*}
 for every  $u\in C_{c}^{1}(\Omega)$.
\end{thm}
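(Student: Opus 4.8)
The plan is to establish the inequality by an integration-by-parts (divergence-theorem) argument applied to a suitable vector field, mimicking the classical Caffarelli-Kohn-Nirenberg proof but carefully tracking the variable exponent $p(x)$. Let me sketch the approach.

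The plan is to derive the estimate from a single integration by parts against the fixed vector field $x-x_{0}$, followed by a careful accounting of the resulting terms. Since $\mathrm{div}(x-x_{0})=N$ and $u\in C_{c}^{1}(\Omega)$, I would set $g=|a|^{p}|u|^{p}=(|a|\,|u|)^{p}$, which is compactly supported in $\Omega$ and of class $C^{1}$ (note that $p>2>1$ forces $\nabla(|a|^{p})=p|a|^{p-1}\mathrm{sgn}(a)\nabla a\to0$ wherever $a\to0$, so the only possible zero of $a$, namely $x_{0}$, causes no trouble). The divergence theorem then yields $\int_{\Omega}\mathrm{div}((x-x_{0})g)\,dx=0$, and hence
\[
N\int_{\Omega}|a|^{p}|u|^{p}\,dx=-\int_{\Omega}(x-x_{0})\cdot\nabla\bigl((|a|\,|u|)^{p}\bigr)\,dx .
\]

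Next I would expand the gradient,
\[
\nabla\bigl((|a|\,|u|)^{p}\bigr)=p(|a|\,|u|)^{p-1}\bigl(|u|\,\mathrm{sgn}(a)\nabla a+|a|\,\mathrm{sgn}(u)\nabla u\bigr)+(|a|\,|u|)^{p}\ln(|a|\,|u|)\,\nabla p ,
\]
splitting the right-hand side into three groups: the $\nabla a$-term (T1), the $\nabla u$-term (T2), and the logarithmic $\nabla p$-term (T3). Writing $C_{0}:=\sup_{\Omega}|x-x_{0}|<\infty$, the term T1 is bounded at once by $pC_{0}\int_{\Omega}|a|^{p-1}|\nabla a|\,|u|^{p}$, the first term on the claimed right-hand side. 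For T2 I would invoke Young's inequality $|u|^{p-1}|\nabla u|\le\varepsilon|u|^{p}+C_{\varepsilon}|\nabla u|^{p}$ with conjugate exponents $p/(p-1)$ and $p$; because $p(x)$ ranges over a compact subinterval $[p_{-},p_{+}]\subset(2,N)$, the constant $C_{\varepsilon}$ can be chosen uniformly, and fixing $\varepsilon$ with $p_{+}C_{0}\varepsilon<N$ lets me reabsorb the resulting multiple of $\int_{\Omega}|a|^{p}|u|^{p}$ into the left-hand side, leaving the gradient term $\int_{\Omega}|a|^{p}|\nabla u|^{p}$.

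The main obstacle is T3, where the factor $\ln(|a|\,|u|)=\ln|a|+\ln|u|$ is a priori unbounded and threatens to produce logarithmic terms absent from the statement. The $\ln|u|$ part I would control by the elementary inequality $t^{p}|\ln t|\le C\,(t^{p+1}+t^{p-1})$ (uniform over the compact range of $p$), which converts it into contributions of the form $\int_{\Omega}|a|^{p}|\nabla p|\,(|u|^{p+1}+|u|^{p-1})$. The genuinely delicate piece is $\ln|a|$, and this is precisely where hypothesis (A) is used: keeping the factor $|x-x_{0}|$ attached to the logarithm, I claim $|x-x_{0}|\,\bigl|\ln|a(x)|\bigr|\le C_{1}$ on $\overline{\Omega}$. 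Away from $x_{0}$ this follows from (A)(1) and continuity, which bound $|a|$ below by a positive constant so that the log is bounded; near $x_{0}$, (A)(2) gives $\bigl|\ln|a|\bigr|\le s\,\bigl|\ln|x-x_{0}|\bigr|+O(1)$, and since $t\,|\ln t|\to0$ as $t\to0^{+}$ the product with $|x-x_{0}|$ remains bounded.

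Finally I would convert powers and collect. Using $|x-x_{0}|\,|\ln|a||\le C_{1}$, the $\ln|a|$ contribution is at most $C_{1}\int_{\Omega}|a|^{p}|\nabla p|\,|u|^{p}$; the elementary bound $t^{p}\le t^{p+1}+t^{p-1}$ replaces $|u|^{p}$ by $|u|^{p+1}+|u|^{p-1}$, and since $a\in C^{1}(\overline{\Omega})$ is bounded, $|a|^{p}\le M|a|^{p-1}$ turns the surviving $|a|^{p}|u|^{p-1}$ factor into the required $|a|^{p-1}|\nabla p|\,|u|^{p-1}$. Assembling T1, T2, T3, reabsorbing the small multiple of $\int_{\Omega}|a|^{p}|u|^{p}$, and dividing by the positive constant $N-p_{+}C_{0}\varepsilon$ then gives the inequality with $\beta$ equal to the maximum of the accumulated constants. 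Beyond the routine estimates, the only points needing care are the $W^{1,1}$-justification of the integration by parts and the uniform-in-$x$ control of the Young and elementary constants, both of which follow from $a,p\in C^{1}(\overline{\Omega})$ together with $2<p<N$.
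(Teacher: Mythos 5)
Your argument is correct, and it is worth noting first that this paper does not actually prove Theorem~\ref{cafan}: the result is imported verbatim from the cited source \cite{annon}, whose proof follows exactly the strategy you reconstructed --- apply the divergence theorem to $(x-x_{0})\,(|a|\,|u|)^{p(x)}$ using $\mathrm{div}(x-x_{0})=N$, bound the $\nabla a$-term directly, absorb the $\nabla u$-term via Young's inequality with a small parameter, and tame the logarithmic $\nabla p$-term coming from differentiating a variable power (this is the Mih\u ailescu--R\u adulescu--Stancu scheme with $|x|$ replaced by $|a(x)|$). The one place you deviate is the treatment of the logarithm: you split $\ln(|a|\,|u|)=\ln|a|+\ln|u|$ and control the $\ln|a|$ piece through hypothesis (A), via $|x-x_{0}|\,\bigl|\ln|a(x)|\bigr|\le C_{1}$. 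That step is sound (your case analysis near and away from $x_{0}$ is exactly right), but it is avoidable: applying the elementary bound $t^{p}|\ln t|\le C\,(t^{p+1}+t^{p-1})$ directly to $t=|a|\,|u|$ gives $(|a|\,|u|)^{p+1}+(|a|\,|u|)^{p-1}$, and then $|a|^{p+1}\le M|a|^{p}$ produces precisely the third and fourth right-hand terms of the theorem --- which is in fact why the statement has the weights $|a|^{p}|\nabla p|\,|u|^{p+1}$ and $|a|^{p-1}|\nabla p|\,|u|^{p-1}$, and why the inequality itself does not need (A)(2) (that hypothesis earns its keep in the compactness Lemma~\ref{com}, not here). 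So your route is a correct but slightly longer variant that spends hypothesis (A)(2) where the streamlined argument spends nothing; since (A)(2) is assumed anyway, this costs no generality. Your attention to the two technical points --- the $W^{1,1}$ (or $C^{1}$, using $p^{-}>2$) justification of the integration by parts at zeros of $a\,u$, and the uniformity in $x$ of the Young and logarithm constants over $p(x)\in[p^{-},p^{+}]$ --- is exactly what a complete write-up requires.
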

Motivated by  Bahrouni, R\u adulescu \& Repov\v{s}~\cite{annon}, we introduce
and study in the present paper 
a
new functional  $T:E_{1}\rightarrow \mathbb{R}$
 via the
Caffarelli-Kohn-Nirenberg inequality, in the framework  of variable
exponents. More precisely, we study the eigenvalue problem in which
functional $T$
 is present. Our main result is Theorem~\ref{mainval} and 
 we prove
 it
  in Section~5.

\section{Function spaces with variable exponent}

We recall some necessary properties of variable
exponent spaces. We refer to 
Hajek, Santalucia, Vanderwerff \& Zizler~\cite{china},
Musielak~\cite{musi},
Papageorgiou,  R\u adulescu \& Repov\v{s}~\cite{PRR},
R\u adulescu~\cite{radnla},
R\u adulescu~\cite{radom},
and
R\u adulescu \& Repov\v{s}~\cite{Rad},
 and the
references therein.

 Consider the set
$$C_+(\overline\Omega)=\{p\in C(\overline\Omega)\mid \;p(x)>1\;{\rm
for}\; {\rm all}\;x\in\overline\Omega\}.$$ 

For any $p\in
C_+(\overline\Omega)$, let
    $$p^+=\sup_{x\in\overline{\Omega}}p(x)\qquad\mbox{and}\qquad p^-=
    \inf_{x\in\overline{\Omega}}p(x),$$ and define the {\it variable exponent Lebesgue
    space} as follows
    $$L^{p(x)}(\Omega)=\left\{u\mid\ u\ \mbox{is
measurable real-valued function such that }
 \int_\Omega|u(x)|^{p(x)}\;dx<\infty\right\},$$ with the {\it Luxemburg norm}
 $$|u|_{p(x)}=\inf\left\{\mu>0\mid \;\int_\Omega\left|
 \frac{u(x)}{\mu}\right|^{p(x)}\;dx\leq 1\right\}.$$
 
  We recall that the variable exponent Lebesgue spaces are separable and reflexive Banach spaces if and only if $1 < p^-\leq
 p^+<\infty$,
  and continuous functions with compact support
 are dense in $L^{p(x)}(\Omega)$ if $p^{+}<\infty$.
 
 Let $L^{q(x)}(\Omega)$ denote the conjugate space of
 $L^{p(x)}(\Omega)$, where $1/p(x)+1/q(x)=1$. If $u\in
 L^{p(x)}(\Omega)$ and $v\in L^{q(x)}(\Omega)$ then  the following
 H\"older-type inequality holds:
 \begin{equation}\label{Hol}
 \left|\int_\Omega uv\;dx\right|\leq\left(\frac{1}{p^-}+
 \frac{1}{q^-}\right)|u|_{p(x)}|v|_{q(x)}\,.
 \end{equation}
 
 An important role in manipulating the generalized Lebesgue-Sobolev spaces is
played by the $p(.)-$modular of the $L^{p(x)}(\Omega)$ space, which
is the mapping $\rho: L^{p(x)}(\Omega) \rightarrow \mathbb{R}$
defined by
$$\rho(u)=\displaystyle \int_{\Omega}|u|^{p(x)}dx.$$
 \begin{prp}\label{pr1}{\rm{(see R\u adulescu \& Repov\v{s}~\cite{Rad})}}
 The following properties hold\\
 $(i)$ $|u|_{p(x)}<1 (\mbox{resp}., =1;>1)\Leftrightarrow \rho(u)<1(\mbox{resp}., =1;>1)$;\\
 $(ii)$ $|u|_{p(x)}>1 \Rightarrow |u|_{p(x)}^{p^{-}}\leq \rho(u) \leq
 |u|_{p(x)}^{p^{+}}$; and\\
 $(iii)$ $|u|_{p(x)}<1 \Rightarrow |u|_{p(x)}^{p^{+}}\leq \rho(u)
 \leq |u|_{p(x)}^{p^{-}}$.
 \end{prp}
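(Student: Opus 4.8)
The plan is to prove Proposition~\ref{pr1} directly from the definition of the Luxemburg norm and the monotonicity properties of the $p(.)$-modular $\rho$. The whole argument rests on the observation that the map $\mu \mapsto \rho(u/\mu) = \int_\Omega |u(x)/\mu|^{p(x)}\,dx$ is continuous and strictly decreasing in $\mu$ on $(0,\infty)$ (for $u\neq 0$), which lets us pin down the value $\mu = |u|_{p(x)}$ via the normalization $\rho(u/|u|_{p(x)}) = 1$. This last equality is the crucial bridge; I would first establish it carefully, using the continuity of $\rho$ together with the fact that the infimum defining the norm is attained. The continuity of $\mu\mapsto\rho(u/\mu)$ follows from the dominated convergence theorem once one notes that $p^+<\infty$ keeps the integrand controlled.

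For part $(i)$, I would argue each direction from this normalization. If $|u|_{p(x)} < 1$, then taking $\mu = 1 > |u|_{p(x)}$ and using the strict monotonicity gives $\rho(u) = \rho(u/1) < \rho(u/|u|_{p(x)}) = 1$. Conversely, if $\rho(u) < 1$, then since $\rho(u/|u|_{p(x)}) = 1 > \rho(u)$, monotonicity forces $|u|_{p(x)} < 1$. The cases $=1$ and $>1$ are handled identically by replacing the strict inequalities with equalities or by reversing them, so part $(i)$ reduces to three symmetric applications of the same monotonicity principle.

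For parts $(ii)$ and $(iii)$, the key is the elementary scaling inequality for the integrand. When $\lambda > 1$ one has $\lambda^{p^-}|t|^{p(x)} \leq \lambda^{p(x)}|t|^{p(x)} \leq \lambda^{p^+}|t|^{p(x)}$ pointwise (and the inequalities reverse when $0<\lambda<1$), because $p^- \leq p(x) \leq p^+$. To prove $(ii)$, suppose $|u|_{p(x)} > 1$ and write $\lambda = |u|_{p(x)}$. Applying the scaling bounds to $u = \lambda\cdot(u/\lambda)$ and integrating gives
\begin{equation}\label{scalebound}
\lambda^{p^-}\rho(u/\lambda) \leq \rho(u) \leq \lambda^{p^+}\rho(u/\lambda)\,.
\end{equation}
Since $\rho(u/\lambda) = \rho(u/|u|_{p(x)}) = 1$ by the normalization, \eqref{scalebound} collapses precisely to $|u|_{p(x)}^{p^-} \leq \rho(u) \leq |u|_{p(x)}^{p^+}$. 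Part $(iii)$ is the mirror image: when $|u|_{p(x)} < 1$ the scaling inequalities reverse, yielding $|u|_{p(x)}^{p^+} \leq \rho(u) \leq |u|_{p(x)}^{p^-}$.

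The main obstacle I expect is the rigorous justification of the normalization identity $\rho(u/|u|_{p(x)}) = 1$, since the definition of the Luxemburg norm only gives the \emph{infimum} over $\mu$ with $\rho(u/\mu) \leq 1$, not that this infimum is attained with equality. Establishing attainment requires the continuity of $\mu \mapsto \rho(u/\mu)$ from both sides at $\mu = |u|_{p(x)}$, which is where the hypothesis $p^+ < \infty$ is genuinely used; without a finite upper bound on the exponent the modular can fail to behave continuously and the clean equality can break down. Once this identity is secured, parts $(i)$--$(iii)$ follow by the routine monotonicity and scaling arguments sketched above.
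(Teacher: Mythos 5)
Your proof is correct, and since the paper gives no argument for Proposition~\ref{pr1} (it simply cites the monograph \cite{Rad}), there is nothing to diverge from: the normalization identity $\rho\left(u/|u|_{p(x)}\right)=1$ (whose attainment you rightly justify via continuity of $\mu\mapsto\rho(u/\mu)$, using $p^{+}<\infty$), strict monotonicity for part $(i)$, and the pointwise scaling bounds $\lambda^{p^{-}}\leq\lambda^{p(x)}\leq\lambda^{p^{+}}$ for parts $(ii)$--$(iii)$ are exactly the standard ingredients of the proof in the cited source. The only point you should make explicit is the trivial case $u=0$, where $|u|_{p(x)}=0$, so the normalization identity is unavailable and strict monotonicity fails; all three assertions hold trivially or vacuously there, but your argument as written tacitly assumes $u\neq 0$.
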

 \begin{prp}\label{pr2}{\rm{(see R\u adulescu \& Repov\v{s}~\cite{Rad})}}
 If $u,u_{n}\in L^{p(x)}(\Omega)$ and  $n\in \mathbb{N}$, then the
   following statements are equivalent\\
 $(1)$ $\displaystyle \lim_{n\rightarrow +\infty}
 |u_{n}-u|_{p(x)}=0$.\\
 $(2)$  $\displaystyle \lim_{n\rightarrow +\infty} \rho(
 u_{n}-u)=0.$\\
 $(3)$ $u_{n}\rightarrow u$ in measure in $\Omega$ and
 $\displaystyle \lim_{n\rightarrow +\infty}\rho(u_{n})=\rho(u).$
 \end{prp}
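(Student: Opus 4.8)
The plan is to establish the chain $(1)\Leftrightarrow(2)$, $(2)\Rightarrow(3)$ and $(3)\Rightarrow(2)$, using throughout the norm--modular relations of Proposition~\ref{pr1}. The equivalence $(1)\Leftrightarrow(2)$ is the routine part, so I would dispose of it first and then argue entirely with the modular. For $(1)\Rightarrow(2)$, once $|u_n-u|_{p(x)}<1$ (which holds for large $n$), part $(iii)$ gives $\rho(u_n-u)\le|u_n-u|_{p(x)}^{p^{-}}\to 0$; conversely, once $\rho(u_n-u)<1$, part $(i)$ forces $|u_n-u|_{p(x)}<1$ and then part $(iii)$ yields $|u_n-u|_{p(x)}\le\rho(u_n-u)^{1/p^{+}}\to 0$.

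For $(2)\Rightarrow(3)$ I would treat the two assertions separately. Convergence in measure follows from a Chebyshev-type estimate: for fixed $\e>0$, on $A_n=\{x:\,|u_n(x)-u(x)|\ge\e\}$ one has $|u_n-u|^{p(x)}\ge c_\e:=\min(\e^{p^{-}},\e^{p^{+}})>0$, whence the measure of $A_n$ is at most $\rho(u_n-u)/c_\e\to 0$. For $\rho(u_n)\to\rho(u)$ I would invoke the elementary inequality that for each $\e>0$ there is $C_\e>0$, uniform for exponents in $[p^{-},p^{+}]$, with $(s+t)^{p}\le(1+\e)s^{p}+C_\e t^{p}$ for $s,t\ge 0$. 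Taking $s=|u|$, $t=|u_n-u|$ and integrating gives $\rho(u_n)\le(1+\e)\rho(u)+C_\e\rho(u_n-u)$, while the symmetric choice gives $\rho(u)\le(1+\e)\rho(u_n)+C_\e\rho(u_n-u)$; letting $n\to\infty$ and then $\e\to 0$ squeezes $\lim_n\rho(u_n)=\rho(u)$.

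The main obstacle is $(3)\Rightarrow(2)$, for which I would first prove a Brezis--Lieb-type identity for the modular: if $v_k\to u$ almost everywhere and $\sup_k\rho(v_k)<\infty$, then $\rho(v_k)-\rho(v_k-u)\to\rho(u)$. Fixing $\e>0$ and using the uniform inequality $\bigl||a+b|^{p}-|b|^{p}\bigr|\le\e|b|^{p}+C_\e|a|^{p}$ with $a=u$, $b=v_k-u$, one dominates $W_k:=\bigl(\bigl||v_k|^{p(x)}-|v_k-u|^{p(x)}-|u|^{p(x)}\bigr|-\e|v_k-u|^{p(x)}\bigr)^{+}$ by $(C_\e+1)|u|^{p(x)}\in L^1(\Omega)$; since $W_k\to 0$ a.e., dominated convergence gives $\int_\Omega W_k\to 0$, and then $\int_\Omega\bigl||v_k|^{p(x)}-|v_k-u|^{p(x)}-|u|^{p(x)}\bigr|\le\int_\Omega W_k+\e\,\rho(v_k-u)$ yields the identity after letting $\e\to 0$, the modulars $\rho(v_k-u)$ being bounded.

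Finally I would close $(3)\Rightarrow(2)$ by a subsequence argument: $\rho(u_n)\to\rho(u)$ gives $\sup_n\rho(u_n)<\infty$, and if $\rho(u_n-u)\not\to 0$ I pass to a subsequence with $\rho(u_n-u)\ge\delta>0$, then use that convergence in measure permits extracting a further subsequence $u_{n_k}\to u$ a.e., to which the identity above applies, forcing $\rho(u_{n_k}-u)=\rho(u_{n_k})-\rho(u)+o(1)\to 0$, a contradiction. The delicate points to watch are the uniformity of $C_\e$ over the compact exponent range $[p^{-},p^{+}]$ and the legitimacy of upgrading convergence in measure to almost everywhere convergence along subsequences, which is precisely where the finiteness of the measure of $\Omega$ enters.
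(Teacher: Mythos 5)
The paper itself offers no proof of Proposition \ref{pr2}: it is quoted verbatim from R\u adulescu \& Repov\v{s} \cite{Rad}, so there is no in-paper argument to compare against, and your proposal must be judged as a standalone proof --- which it passes. Your treatment of $(1)\Leftrightarrow(2)$ via Proposition \ref{pr1} is exactly the standard one, as is the Chebyshev estimate for convergence in measure and the squeeze $\rho(u_n)\le(1+\varepsilon)\rho(u)+C_\varepsilon\rho(u_n-u)$ for the modular convergence in $(2)\Rightarrow(3)$. Where you genuinely diverge from the usual textbook route (Fan--Zhao and \cite{Rad}) is the hard implication $(3)\Rightarrow(2)$: the classical argument combines convergence in measure with $\rho(u_n)\to\rho(u)$ to get uniform integrability of $|u_n|^{p(x)}$ and then applies Vitali's convergence theorem to $|u_n-u|^{p(x)}\le 2^{p^{+}}\bigl(|u_n|^{p(x)}+|u|^{p(x)}\bigr)$, whereas you prove a variable-exponent Brezis--Lieb identity $\rho(v_k)-\rho(v_k-u)\to\rho(u)$ and close by a subsequence--contradiction argument. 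Your route is correct (the uniformity of $C_\varepsilon$ over $p\in[p^{-},p^{+}]$ holds since all constants in the elementary inequalities depend continuously on $p$) and buys a stronger intermediate result of independent interest, at the cost of being slightly longer than the Vitali argument. Two small inaccuracies worth fixing: extracting an a.e.\ convergent subsequence from convergence in measure is valid on arbitrary measure spaces by a Borel--Cantelli argument, so the finiteness of $|\Omega|$ is not what is at stake there (it would be needed for the converse upgrade, from a.e.\ convergence to convergence in measure); and in the dominated convergence step you should say explicitly that $\rho(u)<\infty$ --- immediate here since $u\in L^{p(x)}(\Omega)$ by hypothesis, or via Fatou in the general Brezis--Lieb lemma --- so that $(C_\varepsilon+1)|u|^{p(x)}$ is indeed an integrable dominating function.
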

 We define the {\it variable exponent  Sobolev space} by
  $$
  W^{1,p(x)}(\Omega)=\{u\in L^{p(x)}(\Omega)\mid \; |\nabla u|\in
  L^{p(x)} (\Omega) \}.
  $$
  
On $W^{1,p(x)}(\Omega)$ we  consider the following norm
  $$
  \|u\|_{p(x)}=|u|_{p(x)}+|\nabla u|_{p(x)}.
  $$
  
  Then $W^{1,p(x)}(\Omega)$ is a reflexive separable Banach space.
  
\section{Functional $T$}

We shall introduce a new functional $T:E_{1}\rightarrow \mathbb{R}$ motivated by the
Caffarelli-Kohn-Nirenberg inequality obtained in Bahrouni, R\u adulescu \& Repov\v{s}~\cite{annon}. 

We
denote
 by $E_{1}$ the closure of
$C_{c}^{1}(\Omega)$ under the norm
$$\begin{array}{ll}
\|u\|=&\displaystyle ||B(x)|^{\frac{1}{p(x)}}\nabla u(x)|_{p(x)}+|A(x)^{\frac{1}{p(x)}}u(x)|_{p(x)}+\\
&\displaystyle ||D(x)|^{\frac{1}{p(x)+1}}u(x)|_{p(x)+1}+   ||C(x)|^{\frac{1}{p(x)-1}}u(x)|_{p(x)-1},\end{array}$$
where the potentials $A$, $B$, $C$, and $D$ are defined by
\begin{equation}\label{ABCD}
 \left\{\begin{array}{ll}
 &\displaystyle A(x)=|a(x)|^{p(x)-1}|\nabla a(x)|\\
 &\displaystyle B(x)=|a(x)|^{p(x)}\\
 &\displaystyle C(x)=|a(x)|^{p(x)-1}|\nabla p(x)|\\
 &\displaystyle D(x)=B(x)|\nabla p(x)|.
 \end{array}\right.
 \end{equation}
 
We  
now 
define $T:E_{1}\rightarrow \mathbb{R}$ as follows
\begin{align*}
T(u)&=\displaystyle
\int_{\Omega} \frac{B(x)}{p(x)} |\nabla u(x)|^{p(x)}dx +
\displaystyle \int_{\Omega}\frac{A(x)}{p(x)}|u(x)|^{p(x)}dx \\&+
\displaystyle \int_{\Omega}\frac{D(x)}{p(x)+1}|u(x)|^{p(x)+1}dx+
\displaystyle \int_{\Omega}\frac{C(x)}{p(x)-1}|u(x)|^{p(x)-1}dx.
\end{align*}

The following properties of 
$T$ will be useful in the sequel.
\begin{lemma}\label{der}
Suppose that hypotheses $(A)$ and $(P)$ are satisfied. Then the functional $T$
is well-defined on $E_{1}$. Moreover, $T\in C^{1}(E_{1},\mathbb{R})$
with the derivative given by
\begin{align*}
\langle L(u),v\rangle=\langle T'(u),v\rangle&= \displaystyle
\int_{\Omega} B(x) |\nabla u(x)|^{p(x)-2}\nabla u(x) \nabla v(x)dx +
\displaystyle \int_{\Omega}A(x)|u(x)|^{p(x)-2}u(x) v(x)dx \\&+
\displaystyle \int_{\Omega}D(x)|u(x)|^{p(x)-1}u(x) v(x)dx+
\displaystyle \int_{\Omega}C(x)|u(x)|^{p(x)-3}u(x)v(x)dx,
\end{align*}
for every $u,v\in E_{1}$.
\end{lemma}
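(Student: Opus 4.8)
The plan is to split $T=T_1+T_2+T_3+T_4$ according to the four integrals, in the order they are written, and to exploit the fact that, up to the bounded factors $1/p(x)$, $1/(p(x)+1)$, $1/(p(x)-1)$, each $T_i$ coincides with the modular attached to one of the four weighted Luxemburg summands of $\|\cdot\|$. I first record that under $(A)$ and $(P)$ the potentials in \eqref{ABCD} are nonnegative and bounded on $\overline{\Omega}$: since $a,p\in C^1(\overline{\Omega})$ and $\overline{\Omega}$ is compact, the quantities $|a|,|\nabla a|,|\nabla p|$ together with the exponents $p(x),p(x)\pm 1$ are bounded, so $A,B,C,D\in C(\overline{\Omega})$ with $A,B,C,D\ge 0$. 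For well-definedness I use the pointwise identity
\[
\int_{\Omega}B(x)|\nabla u|^{p(x)}\,dx=\rho\!\left(|B(x)|^{1/p(x)}\nabla u\right),
\]
and the three analogous identities for $T_2,T_3,T_4$ (with exponents $p(x)$, $p(x)+1$, $p(x)-1$). As $2<p^-\le p(x)\le p^+<\infty$, the weights $1/p(x),1/(p(x)+1),1/(p(x)-1)$ are bounded, and Proposition \ref{pr1} shows that the finiteness of each Luxemburg summand of $\|u\|$ forces the finiteness of the matching modular. Hence $T(u)$ is finite for every $u\in E_1$.

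Next I would compute the G\^ateaux derivative. Because $p(x)>2$, each integrand $s\mapsto|s|^{p(x)}$, $|s|^{p(x)+1}$, $|s|^{p(x)-1}$ is of class $C^1$ on $\RR$, with derivatives $p(x)|s|^{p(x)-2}s$, $(p(x)+1)|s|^{p(x)-1}s$ and $(p(x)-1)|s|^{p(x)-3}s$ respectively; note the last is continuous at the origin precisely because $p(x)-2>0$. Differentiating the integrands of $T_1,\dots,T_4$ along $u+tv$, dominating the difference quotients by fixed integrable functions built from the (finite) modulars of $u$ and $v$ via the mean value theorem, and invoking dominated convergence, I may pass $t\to0$ under the integral sign; the factors $p(x),p(x)+1,p(x)-1$ cancel the denominators and leave exactly the four terms of $\langle L(u),v\rangle$. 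That $L(u)\in E_1^{*}$ then follows from the H\"older inequality \eqref{Hol}: writing $B=B^{1/p'(x)}B^{1/p(x)}$ with $1/p(x)+1/p'(x)=1$ pairs a factor whose $p'(x)$-modular equals $\int_{\Omega}B|\nabla u|^{p(x)}\,dx$ against $|B|^{1/p(x)}\nabla v$, and likewise for the other three terms, whence $|\langle L(u),v\rangle|\le C\,\|v\|$.

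The decisive step, and the one I expect to be the main obstacle, is the continuity of $u\mapsto L(u)$ from $E_1$ into $E_1^{*}$; combined with the existence of the G\^ateaux derivative this yields $T\in C^1(E_1,\RR)$. Given $u_n\to u$ in $E_1$, Proposition \ref{pr2} translates this norm convergence into modular convergence and convergence in measure for each of the four weighted quantities $|B|^{1/p(x)}\nabla u_n$, $A^{1/p(x)}u_n$, $|D|^{1/(p(x)+1)}u_n$ and $|C|^{1/(p(x)-1)}u_n$. Passing to a subsequence I arrange pointwise a.e.\ convergence with an integrable dominating function, and then the continuity through the origin of the Nemytskii maps $s\mapsto|s|^{p(x)-2}s$, $|s|^{p(x)-1}s$ and $s\mapsto|s|^{p(x)-3}s$ lets me estimate $\|L(u_n)-L(u)\|_{E_1^{*}}$ by a weighted H\"older pairing and send it to $0$ through a Vitali-type argument in the conjugate variable-exponent spaces. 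A standard subsequence-of-a-subsequence argument then promotes this to convergence of the full sequence.

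Throughout, the genuinely nonuniform feature is that $A,B,C,D$ degenerate at $x_0$, where $a(x_0)=0$, so every convergence must be carried out in the weighted modular topology rather than in an unweighted $L^{p(x)}$ space. The summand $T_4$, carrying the exponent $p(x)-1$ and the nonlinearity $|u|^{p(x)-3}u$, is the most delicate, and its good behaviour near $u=0$ rests squarely on the hypothesis $p(x)>2$ in $(P)$.
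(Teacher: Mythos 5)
Your proposal is correct and follows exactly the standard route that the paper itself invokes: its entire proof of Lemma \ref{der} is the one-line remark ``The proof is standard, see R\u adulescu \& Repov\v{s}~\cite{Rad},'' and your outline (splitting $T$ into the four modular pieces, G\^ateaux differentiation by mean value theorem plus dominated convergence, boundedness of $L(u)$ via the weighted H\"older pairing \eqref{Hol}, and continuity of $L$ via Proposition \ref{pr2} and a Vitali/subsequence argument) is precisely that standard argument, spelled out in the weighted setting. Your observations that $(p(x)-1)p'(x)$-type pairings close up correctly and that the $C$-term nonlinearity $|u|^{p(x)-3}u$ is continuous at the origin only because $p^->2$ are accurate and capture the only points where the weights and the hypothesis $(P)$ genuinely enter.
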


\begin{proof}
The proof is standard, see R\u adulescu \& Repov\v{s}~\cite{Rad}.
\end{proof}
\begin{lemma}\label{sss}
Suppose that hypotheses $(A)$ and $(P)$ are satisfied. Then the following properties hold\\
$(i)$ $L:E_{1}\rightarrow E^{\ast}_{1}$ is a continuous, bounded and
strictly monotone operator;\\
$(ii)$ $L$ is a mapping of type $(S_{+})$, i.e. if
$u_{n}\rightharpoonup u$ in $E_{1}$ and 
$$\displaystyle \limsup_{n
\rightarrow +\infty}
\langle L(u_{n})-L(u), u_{n}-u\rangle \leq
0,$$
 then $u_{n}\rightarrow u$ in $E_{1}$.
\end{lemma}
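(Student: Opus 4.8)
The plan is to treat the four terms of $L$ separately, using that each arises as the derivative of a convex density and is therefore individually monotone. For the boundedness in part $(i)$ I would factor each weight, e.g. $B=B^{(p(x)-1)/p(x)}\,B^{1/p(x)}$, so that $B^{(p(x)-1)/p(x)}|\nabla u|^{p(x)-1}=\big(|B|^{1/p(x)}|\nabla u|\big)^{p(x)-1}$ sits in the conjugate space of $|B|^{1/p(x)}\nabla v\in L^{p(x)}(\Omega)$; the H\"older-type inequality \eqref{Hol} together with Proposition \ref{pr1} then bounds the gradient term by a constant times $\||B|^{1/p(x)}\nabla u\|^{\,p(x)-1}\,\||B|^{1/p(x)}\nabla v\|$, and the $A$, $D$, $C$ terms are estimated identically with exponents $p(x)$, $p(x)+1$, $p(x)-1$. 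Summing and passing to the norm via Proposition \ref{pr1} yields $\|L(u)\|_{E_1^*}\le C(1+\|u\|^{p^+})$, which gives boundedness; continuity of $L$ follows from the continuity of the associated Nemytskii operators on the variable-exponent Lebesgue spaces combined with the same H\"older estimates, so that $u_n\to u$ in $E_1$ forces $L(u_n)\to L(u)$ in $E_1^*$.

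For strict monotonicity I would invoke the elementary inequalities for the maps $\xi\mapsto|\xi|^{t-2}\xi$. For each $x$ the integrand of $\langle L(u)-L(v),u-v\rangle$ is a sum of four quantities of the form $\big(|\alpha|^{t-2}\alpha-|\beta|^{t-2}\beta\big)(\alpha-\beta)\ge 0$, with $t$ equal to $p(x)$, $p(x)$, $p(x)+1$, $p(x)-1$ respectively, all exceeding $1$ by hypothesis $(P)$. Hence $\langle L(u)-L(v),u-v\rangle\ge 0$, and the inequality is strict whenever $u\ne v$: if $\|u-v\|>0$ then at least one of the four norm components of $u-v$ is positive, which forces the corresponding nonnegative integral to be strictly positive.

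For the $(S_+)$ property, suppose $u_n\rightharpoonup u$ and $\limsup_n\langle L(u_n)-L(u),u_n-u\rangle\le 0$. Monotonicity gives $\langle L(u_n)-L(u),u_n-u\rangle\ge 0$, so the limit exists and equals $0$; since the pairing is a sum of four nonnegative integrals, each tends to $0$ separately. For the three super-quadratic terms (the gradient, $A$, and $D$ terms, whose exponents $p(x),p(x),p(x)+1$ all exceed $2$), the inequality $\big(|\alpha|^{t-2}\alpha-|\beta|^{t-2}\beta\big)(\alpha-\beta)\ge c_t|\alpha-\beta|^t$, with $c_t$ uniform on the relevant compact range of $t$, converts the vanishing of the integral into modular convergence, e.g. $\int_\Omega B(x)|\nabla u_n-\nabla u|^{p(x)}\,dx\to 0$; by Proposition \ref{pr2} this is equivalent to $\||B|^{1/p(x)}\nabla(u_n-u)\|\to 0$, and similarly for the $A$ and $D$ components.

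The main obstacle is the $C$ term, whose exponent $t=p(x)-1$ can be sub-quadratic on the set $\{2<p(x)<3\}$, where the clean lower bound above fails. There I would use the sub-quadratic inequality $|\alpha-\beta|^{t}\le c_t\big[\big(|\alpha|^{t-2}\alpha-|\beta|^{t-2}\beta\big)(\alpha-\beta)\big]^{t/2}\,(|\alpha|+|\beta|)^{t(2-t)/2}$, which is valid since $t\ge p^--1>1$ keeps $c_t$ uniform, multiply by $C(x)$, split $C(x)=C(x)^{t/2}C(x)^{(2-t)/2}$, and apply \eqref{Hol} with the pointwise conjugate pair $\big(2/t,\,2/(2-t)\big)$. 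The resulting first factor is a power of the monotonicity integral $\int_\Omega C(x)\big(|u_n|^{t-2}u_n-|u|^{t-2}u\big)(u_n-u)\,dx\to 0$, while the second factor $\int_\Omega C(x)(|u_n|+|u|)^{p(x)-1}\,dx$ stays bounded because $u_n\rightharpoonup u$ keeps $\||C|^{1/(p(x)-1)}u_n\|$ bounded; hence $\int_\Omega C(x)|u_n-u|^{p(x)-1}\,dx\to 0$, and Proposition \ref{pr2} again gives $\||C|^{1/(p(x)-1)}(u_n-u)\|\to 0$. Adding the four norm components yields $\|u_n-u\|\to 0$. The delicate point throughout is that both the weights $A,B,C,D$ and the exponents are variable, so one must pass between modular and norm via Propositions \ref{pr1}--\ref{pr2} rather than work with a fixed power, and the sub-quadratic $C$ term requires the extra H\"older splitting described above.
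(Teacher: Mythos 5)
Your proposal is correct and follows essentially the same route as the paper, whose own proof is only a sketch: boundedness and continuity via H\"older/modular estimates, strict monotonicity from the two Simon inequalities \eqref{s} (the sub-quadratic one being needed exactly where you put it, on the $C$-term, since hypothesis $(P)$ only guarantees $p(x)-1>1$), and the $(S_+)$ property by the term-by-term splitting argument that the paper delegates wholesale to Theorem 3.1 of Fan \& Zhang \cite{Fan}. The one point to tidy is that on the set $\{2<p(x)<3\}$ the conjugate exponent $2/(2-t(x))=2/(3-p(x))$ may be unbounded, so Proposition \ref{pr1} does not literally apply to your second H\"older factor; this is cosmetic rather than a gap, since a bounded modular still yields a bounded Luxemburg norm for any exponent with infimum at least $1$ (note $\rho(u/\mu)\leq\rho(u)/\mu$ for $\mu\geq 1$), or one can bypass the norm estimate entirely with a pointwise Young inequality of the form $X^{t/2}Y^{(2-t)/2}\leq \lambda^{-1}X+\lambda Y$, valid uniformly for $t\in[1,2)$ and $\lambda\in(0,1)$.
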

\begin{proof}
(i) Evidently, $L$ is a bounded operator. Recall the following Simon inequalities
(see Simon~\cite{simon}):
\begin{equation}\label{s}
\begin{cases}
\left|x-y\right|^{p}\leq
c_{p}\left(\left|x\right|^{p-2}x-\left|y\right|^{p-2}y\right).(x-y)
\; \; \; \; \;  \; \; \; \; \;   \; \; \; \; \; \; \; \; \; \; \; \; \; \; \; \; \; \mbox{for} \ \ p\geq 2 \\
 \left|x-y\right|^{p} \leq C_{p}\left[\left(\left|x\right|^{p-2}x-\left|y\right|^{p-2}y\right).(x-y)\right]^{\frac{p}{2}}
 \left(\left|x\right|^{p}+\left|y\right|^{p}\right)^{\frac{2-p}{2}} \; \; \mbox{for} \ \ 1<p< 2,
\end{cases}
\end{equation}
for every  $x,y\in \mathbb{R}^{N}$, where 
$$c_{p}=(\frac{1}{2})^{-p} \ \ 
\mbox{and} \ \ 
C_{p}=\frac{1}{p-1}.$$

 Using inequalities
 \eqref{s} and recalling that $2<p^{-}$, we can prove that $L$ is 
a strictly monotone operator.

(ii) The proof is identical to the proof of Theorem $3.1$ in
Fan \& Zhang~\cite{Fan}.
\end{proof}

\section{Main theorem} 

We recall our Compactness Lemma:

\begin{lemma}\label{com} {\rm{(see Bahrouni, R\u adulescu \& Repov\v{s}~\cite{annon})}}
Suppose that hypotheses $(A)$ and $(P)$ are satisfied
and
that $p^{-}>1+s$. Then $E_{1}$ is compactly embeddable
 in
$L^{q}(\Omega)$ for each $q\in (1,\frac{Np^{-}}{N+sp^{+}})$.
Moreover, the same conclusion holds if we  replace $L^{q}(\Omega)$
by $L^{q(x)}(\Omega)$, provided that $q^+< \frac{Np^{-}}{N+sp^{+}}$.
\end{lemma}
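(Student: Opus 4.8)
The plan is to establish the continuous embedding $E_{1}\hookrightarrow L^{q}(\Omega)$ first and then upgrade it to a compact one by a localization argument that isolates the single point $x_{0}$ at which the weight $B(x)=|a(x)|^{p(x)}$ degenerates. The two regimes are dictated by the two parts of hypothesis $(A)$: away from $x_{0}$ the weight is bounded below, while near $x_{0}$ it behaves like $|x-x_{0}|^{sp(x)}$. Accordingly, fix $\delta\in(0,r)$ and write $\Omega=\Omega_{\delta}\cup B(x_{0},\delta)$, where $\Omega_{\delta}:=\Omega\setminus B(x_{0},\delta)$. On $\Omega_{\delta}$, continuity of $a$ together with $(A)(1)$ yields $|a(x)|\geq c_{\delta}>0$, so the first two terms of $\|\cdot\|$ dominate $|\nabla u|_{p(x)}+|u|_{p(x)}$ up to the constant $c_{\delta}$; hence $E_{1}$ embeds continuously into $W^{1,p(x)}(\Omega_{\delta})$.

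Since $\frac{Np^{-}}{N+sp^{+}}<p^{-}$, our whole range satisfies $q<p^{-}\leq p(x)$, which is comfortably below the critical Sobolev exponent. Therefore the classical Rellich--Kondrachov theorem in variable-exponent form gives, for every fixed $\delta$, a compact embedding $W^{1,p(x)}(\Omega_{\delta})\hookrightarrow L^{q}(\Omega_{\delta})$, and thus $E_{1}$ is compactly embedded into $L^{q}(\Omega_{\delta})$. The remaining work is near $x_{0}$: I must show that the $L^{q}$-mass on $B(x_{0},\delta)$ is controlled by $\|u\|$ and vanishes uniformly over bounded subsets of $E_{1}$ as $\delta\to 0$. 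A bound on $\|u\|$ controls $||a(x)|\nabla u|_{p(x)}$, hence by Proposition~\ref{pr1} the modular $\int|a(x)|^{p(x)}|\nabla u|^{p(x)}\,dx$, and by $(A)(2)$ this dominates $\int_{B(x_{0},r)}|x-x_{0}|^{sp(x)}|\nabla u|^{p(x)}\,dx$. Feeding this into a weighted Sobolev (Caffarelli--Kohn--Nirenberg type) inequality for the degenerate weight furnishes a continuous bound $\|u\|_{L^{q_{c}}(B(x_{0},r))}\leq C\|u\|$ at the exponent $q_{c}=\frac{Np^{-}}{N+sp^{+}}$. A Hölder inequality on the small ball then gives
\[
\left(\int_{B(x_{0},\delta)}|u|^{q}\,dx\right)^{1/q}\leq |B(x_{0},\delta)|^{\frac{1}{q}-\frac{1}{q_{c}}}\,\|u\|_{L^{q_{c}}(B(x_{0},\delta))}\leq C\,\delta^{\,N\left(\frac{1}{q}-\frac{1}{q_{c}}\right)}\,\|u\|,
\]
where the exponent $N\!\left(\frac{1}{q}-\frac{1}{q_{c}}\right)$ is strictly positive precisely because $q<q_{c}$. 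This strict subcriticality is exactly what produces the decaying tail.

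To conclude compactness, take a bounded sequence $u_{n}$ in $E_{1}$. For each fixed $\delta$ the compact embedding on $\Omega_{\delta}$ extracts a subsequence converging in $L^{q}(\Omega_{\delta})$; a diagonal argument over a sequence $\delta\to 0$, combined with the uniform tail estimate above, yields a subsequence that is Cauchy in $L^{q}(\Omega)$, which proves the compact embedding. For the variable-exponent target, the hypothesis $q^{+}<\frac{Np^{-}}{N+sp^{+}}$ permits a constant $q_{0}$ with $q^{+}\leq q_{0}<\frac{Np^{-}}{N+sp^{+}}$; on the bounded domain $\Omega$ one has the continuous inclusion $L^{q_{0}}(\Omega)\hookrightarrow L^{q(x)}(\Omega)$, so composing it with the compact embedding $E_{1}\hookrightarrow L^{q_{0}}(\Omega)$ gives compactness into $L^{q(x)}(\Omega)$, with Proposition~\ref{pr2} converting $L^{q_{0}}$-convergence into modular and hence norm convergence.

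The main obstacle is the near-$x_{0}$ estimate: establishing the weighted Sobolev inequality at the exponent $q_{c}$ in the variable-exponent setting with the degenerate weight $|x-x_{0}|^{s}$, and thereby extracting the positive power of $\delta$. Once this continuous bound is in hand, the splitting, the far-field Rellich--Kondrachov compactness, and the diagonal extraction are routine.
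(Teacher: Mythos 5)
Note first that the paper you are trying to reconstruct does not actually prove Lemma~\ref{com}: it is quoted verbatim from Bahrouni, R\u adulescu \& Repov\v{s}~\cite{annon}, where the argument is much more direct than yours. There one shows a \emph{global continuous} embedding $E_{1}\hookrightarrow W_{0}^{1,q}(\Omega)$ for every $q<\frac{Np^{-}}{N+sp^{+}}$ by a single H\"older step: since $|a(x)|^{-1}\leq |x-x_{0}|^{-s}$ on $B(x_{0},r)$ and $|a|$ is bounded below outside the ball, one writes $\int_{\Omega}|\nabla u|^{q}\,dx=\int_{\Omega}|a|^{-q}\,|a|^{q}|\nabla u|^{q}\,dx$ and applies the H\"older inequality with exponents $p(x)/q$ and its conjugate; the singular factor $|x-x_{0}|^{-sqp(x)/(p(x)-q)}$ is integrable precisely when $q<\frac{Np^{-}}{N+sp^{+}}$ (and $p^{-}>1+s$ together with $p^{+}<N$ makes this threshold exceed $1$). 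The classical Rellich--Kondrachov theorem for $W_{0}^{1,q}(\Omega)$ then gives compactness at once, and your $L^{q_{0}}\hookrightarrow L^{q(x)}$ step handles the variable-exponent target exactly as you wrote. No decomposition, tail estimate, or diagonal extraction is needed.

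Against this benchmark your proposal has one genuine gap and one false step. The gap is the one you yourself flag as ``the main obstacle'': the continuous bound $\|u\|_{L^{q_{c}}(B(x_{0},r))}\leq C\|u\|$ at the endpoint $q_{c}=\frac{Np^{-}}{N+sp^{+}}$ is asserted, not proved, and the inequality you invoke does not deliver it. The classical Caffarelli--Kohn--Nirenberg inequality \eqref{CKNineq} with weight $|x-x_{0}|^{sp}$ corresponds to $a=-s$, and an unweighted left-hand side requires $b=0\in[a,a+1]=[-s,1-s]$, i.e.\ $s\leq 1$, whereas hypothesis $(A)$ allows any $s>1$; the variable-exponent version (Theorem~\ref{cafan}) is of no direct use either, since it carries the extra $A$-, $C$-, $D$-terms and is not an a priori $L^{q_{c}}$ bound. (The estimate you need is in fact true, but the proof is the elementary H\"older step above giving $\nabla u\in L^{t}(\Omega)$ for $t<q_{c}$, followed by the Sobolev inequality $\|u\|_{L^{t^{*}}}\leq C\|\nabla u\|_{L^{t}}$ valid for $u\in C_{c}^{1}(\Omega)$ --- and once you have that, your whole splitting machinery is superfluous.) The false step is the far-field claim that on $\Omega_{\delta}$ ``the first two terms of $\|\cdot\|$ dominate $|\nabla u|_{p(x)}+|u|_{p(x)}$'': the zero-order weight is $A(x)=|a(x)|^{p(x)-1}|\nabla a(x)|$, and hypothesis $(A)$ gives no lower bound on $|\nabla a|$ away from $x_{0}$; indeed $a$ may be constant outside $B(x_{0},r)$, making $A\equiv 0$ there, so the $E_{1}$-norm does not control $|u|_{L^{p(x)}(\Omega_{\delta})}$ and your appeal to Rellich--Kondrachov on $W^{1,p(x)}(\Omega_{\delta})$ is unjustified as written. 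The control of $u$ itself must come from the gradient term together with the fact that $E_{1}$ is the closure of compactly supported functions, which is exactly what the $W_{0}^{1,q}$ route exploits.
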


We are concerned with the following nonhomogeneous
problem
\begin{equation}\label{val}
  \left\{\begin{array}{ll} &\displaystyle -{\rm div}\,(B(x)|\nabla
u|^{p(x)-2}\nabla
u)+(A(x)|u|^{p(x)-2}+C(x)|u|^{p(x)-3})u=\\
&\displaystyle (\lambda |u|^{q(x)-2}-D(x)|u|^{p(x)-1})u
\quad\mbox{in
}\phantom{\partial}\Omega,\\
&u=0 \quad\mbox{on}\ \partial\Omega\,,
\end{array}\right.
\end{equation}
where $\lambda >0$ is a real number and $q$ is continuous on
$\overline{\Omega}$. We assume that $q$ satisfies the following
basic inequalities
$$ (Q) \ \ \  \ \ \ \  \ \ \ \ \  \ \ \ \ \ \ \ \ \ \ \    1<\min_{x\in \overline{\Omega}}q(x)<\min_{x\in \overline{\Omega}}(p(x)-1)<\max_{x\in \overline{\Omega}}q(x)<\frac{Np^{-}}{N+sp^{+}}. $$

We can now state the main result of this paper.

\begin{thm}\label{mainval}
 Suppose that all hypotheses of Lemma \ref{com} are satisfied and
  that inequalities $(Q)$ hold. Then there exists
 $\lambda_{0}>0$ such that every $\lambda \in (0,\lambda_{0})$ is
 an eigenvalue for problem \eqref{val}.
\end{thm}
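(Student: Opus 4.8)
The plan is to realize the weak solutions of \eqref{val} as critical points of the energy functional $J_\lambda\colon E_1\to\RR$,
\[
J_\lambda(u)=T(u)-\lambda\int_\Omega\frac{1}{q(x)}|u(x)|^{q(x)}\,dx .
\]
Writing $N(u)=\int_\Omega\frac{1}{q(x)}|u|^{q(x)}\,dx$, one checks directly, by integrating the divergence term by parts and invoking Lemma~\ref{der}, that $\langle J_\lambda'(u),v\rangle=\langle L(u),v\rangle-\lambda\int_\Omega|u|^{q(x)-2}uv\,dx$ is precisely the weak formulation of \eqref{val}. Hence it suffices to produce, for each small $\lambda>0$, a nontrivial critical point of $J_\lambda$. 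Since $q^+<Np^-/(N+sp^+)$, Lemma~\ref{com} gives a compact embedding $E_1\hookrightarrow L^{q(x)}(\Omega)$, so $N$ is of class $C^1$ with compact derivative, and $J_\lambda\in C^1(E_1,\RR)$ by Lemma~\ref{der}.

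Next I would set up a local minimization geometry on a small ball $\overline{B}_\rho=\{u\in E_1:\|u\|\le\rho\}$. Because all four potentials are nonnegative, $T\ge 0$, so $J_\lambda$ is bounded below on $\overline{B}_\rho$, with $\inf_{\overline{B}_\rho}J_\lambda\ge-\lambda\sup_{\overline{B}_\rho}N>-\infty$. To see that this infimum is negative, I fix $w\in C^1_c(\Omega)\setminus\{0\}$ with support in the nonempty open set $\{x\in\Omega: q(x)<\min_{\overline{\Omega}}(p-1)\}$, which is nonempty by continuity since $q^-<p^--1$ under $(Q)$. As $t\to 0^+$, each of the four modulars in $T(tw)$ is $\le C_w\,t^{\,p^--1}$ (all the exponents $p,p,p+1,p-1$ exceed $p^--1$ and $t<1$), whereas $\lambda N(tw)\ge c_w\lambda\,t^{\,q_w}$ with $q_w:=\max_{\mathrm{supp}\,w}q<p^--1$; hence $J_\lambda(tw)<0$ for $t$ small, giving $\inf_{\overline{B}_\rho}J_\lambda<0$. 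On the sphere I would argue in the opposite direction: for $\|u\|=\rho<1$, Proposition~\ref{pr1} bounds $T(u)$ below by a fixed power of $\rho$ (the worst component exponent being $p^++1$), while the embedding together with Proposition~\ref{pr1} bounds $\lambda N(u)$ above by $\lambda c_2\rho^{\,q^-}$. This yields $J_\lambda(u)\ge c_1\rho^{\,p^++1}-\lambda c_2\rho^{\,q^-}$ on $\partial B_\rho$; since $q^-<p^++1$, I first fix $\rho$ small and then choose $\lambda_0>0$ so that for every $\lambda\in(0,\lambda_0)$ there is $a_\lambda>0$ with $J_\lambda|_{\partial B_\rho}\ge a_\lambda>0$. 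Consequently $c_\lambda:=\inf_{\overline{B}_\rho}J_\lambda<0<\inf_{\partial B_\rho}J_\lambda$, so the infimum over the closed ball is negative and can only be approached in the interior.

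The final step converts this into a genuine critical point. Applying Ekeland's variational principle on the complete metric space $\overline{B}_\rho$ produces a minimizing sequence $(u_n)$ with $J_\lambda(u_n)\to c_\lambda$ and $\|J_\lambda'(u_n)\|_{E_1^{*}}\to 0$; since $c_\lambda<\inf_{\partial B_\rho}J_\lambda$, the $u_n$ lie in the open ball for large $n$, so this is a bona fide Palais--Smale sequence. Being bounded, $(u_n)$ admits, along a subsequence, a weak limit $u$ in the reflexive space $E_1$, and by the compact embedding of Lemma~\ref{com} we have $u_n\to u$ in $L^{q(x)}(\Omega)$, whence $\langle N'(u_n),u_n-u\rangle\to 0$. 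Combining this with $\langle J_\lambda'(u_n),u_n-u\rangle\to 0$ gives $\limsup_n\langle L(u_n)-L(u),u_n-u\rangle\le 0$, and the $(S_+)$ property of Lemma~\ref{sss}(ii) upgrades the convergence to $u_n\to u$ strongly. Passing to the limit, $J_\lambda'(u)=0$ and $J_\lambda(u)=c_\lambda<0=J_\lambda(0)$, so $u\neq 0$ is a nontrivial weak solution and $\lambda$ is an eigenvalue of \eqref{val}.

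I expect the main obstacle to be the geometric step, not the compactness argument. Because of the competing variable exponents one must choose $\rho$ and $\lambda_0$ so that the lower bound for $T$ on the sphere genuinely dominates the perturbation, and this rests on a careful bookkeeping of which power of $\rho$ (respectively of $t$) governs each of the four modulars in $T$ and the modular in $N$ through Proposition~\ref{pr1} and the embedding of Lemma~\ref{com}; the interplay of the inequalities $q^-<p^--1<q^+<Np^-/(N+sp^+)$ in $(Q)$ is exactly what makes both the interior negativity and the sphere barrier compatible. Once the geometry is secured, the remaining descent and convergence argument is routine given Lemmas~\ref{com} and~\ref{sss}.
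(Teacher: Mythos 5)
Your proposal is correct and follows essentially the same route as the paper: the same functional $I_\lambda=T-\lambda\int_\Omega |u|^{q(x)}/q(x)\,dx$, the same sphere barrier via Proposition~\ref{pr1} and the embedding of Lemma~\ref{com} (the paper's Lemma~\ref{g1}, with the same exponents $p^++1$ versus $q^-$), the same interior negativity via a test function supported where $q(x)<p^--1$ (the paper's Lemma~\ref{g2}), and the same Ekeland--Palais--Smale--$(S_+)$ conclusion. Your only departures are cosmetic refinements (e.g., noting explicitly that $c_\lambda<\inf_{\partial B_\rho}I_\lambda$ forces the minimizing sequence into the open ball, a point the paper leaves implicit).
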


In order to prove Theorem \ref{mainval} (this will be done in the Section~5), we shall need some preliminary results. We begin by defining the functional $I_{\lambda}:
E_{1}\rightarrow \mathbb{R},$ 
\begin{align*}
I_{\lambda}(u)&=\displaystyle \int_{\Omega} \frac{B(x)}{p(x)}
|\nabla u(x)|^{p(x)}dx+ \displaystyle \int_{\Omega}
\frac{A(x)}{p(x)}|u(x)|^{p(x)}dx+\displaystyle \int_{\Omega}
\frac{C(x)}{p(x)-1}|u(x)|^{p(x)-1}dx\\
&+ \displaystyle \int_{\Omega} \frac{D(x)}{p(x)+1}|u(x)|^{p(x)+1}dx
-\lambda \displaystyle \int_{\Omega}\frac{ |u(x)|^{q(x)}}{q(x)}dx.
\end{align*}

Standard argument shows that $I_{\lambda}\in C^{1}(E_{1},
\mathbb{R})$ and
\begin{align*}
\langle I_{\lambda}'(u),v\rangle&= \displaystyle \int_{\Omega} B(x)
|\nabla u(x)|^{p(x)-2}\nabla u(x) \nabla v(x)dx + \displaystyle
\int_{\Omega}A(x)|u(x)|^{p(x)-2}u(x) v(x)dx \\&+ \displaystyle
\int_{\Omega}D(x)|u(x)|^{p(x)-1}u(x) v(x)dx+ \displaystyle
\int_{\Omega}C(x)|u(x)|^{p(x)-3}u(x)v(x)dx\\& - \lambda \int_{\Omega}
|u(x)|^{q(x)-2}u(x) v(x),
\end{align*}

for every  $u,v \in E_{1}$. 

Thus the weak solutions of problem
\eqref{val} coincide with the critical points of $I_{\lambda}$.
\begin{lemma}\label{g1}
Suppose that all hypotheses of Theorem \ref{mainval} are satisfied.
Then there exists $\lambda_{0}>0$ such that for any $\lambda \in
(0,\lambda_{0})$ there exist $\rho,\alpha>0$ such that
$$I_{\lambda}(u)\geq \alpha \ \ \mbox{for any }\ \ u\in E_{1} \ \
\mbox{with} \ \ \|u\|=\rho.$$
\end{lemma}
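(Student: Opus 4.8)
The plan is to establish the mountain-pass geometry of $I_{\lambda}$ near the origin by separating the four nonnegative potential terms from the concave perturbation $-\lambda\int_{\Omega}|u(x)|^{q(x)}/q(x)\,dx$ and estimating each of them through the modular of the appropriately weighted function. The guiding observation is that, since $(|B(x)|^{1/p(x)})^{p(x)}=B(x)$, one has $\int_{\Omega}B(x)|\nabla u|^{p(x)}dx=\int_{\Omega}\big||B(x)|^{1/p(x)}\nabla u\big|^{p(x)}dx$, i.e.\ exactly the $p(\cdot)$-modular of the weighted gradient $|B|^{1/p(x)}\nabla u$ that enters the norm $\|\cdot\|$ of $E_{1}$; the same identity holds for the $A$, $C$ and $D$ terms, the last two being modulars in $L^{p(x)-1}(\Omega)$ and $L^{p(x)+1}(\Omega)$.

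Write $\|u\|=n_{1}+n_{2}+n_{3}+n_{4}$, where $n_{1},\dots,n_{4}$ are the four Luxemburg norms entering $\|\cdot\|$. On the sphere $\|u\|=\rho$ with $\rho<1$ every $n_{i}<1$, so Proposition~\ref{pr1}(iii) bounds each of the four modulars below by $n_{i}^{m_{i}}$, where $m_{i}\in\{p^{+},\,p^{+}+1,\,p^{+}-1\}$ is the largest exponent attached to the $i$-th term. Since each $m_{i}\le p^{+}+1$ and $n_{i}<1$, we have $n_{i}^{m_{i}}\ge n_{i}^{p^{+}+1}$, and a convexity (Jensen) inequality for $t\mapsto t^{p^{+}+1}$ gives $\sum_{i=1}^{4}n_{i}^{p^{+}+1}\ge 4^{-p^{+}}\big(\sum_{i=1}^{4}n_{i}\big)^{p^{+}+1}=4^{-p^{+}}\rho^{p^{+}+1}$. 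As every coefficient $1/p(x)$, $1/(p(x)\pm1)$ is at least $1/(p^{+}+1)$, the sum of the first four integrals in $I_{\lambda}(u)$ is therefore at least $C_{1}\rho^{p^{+}+1}$, with $C_{1}:=4^{-p^{+}}/(p^{+}+1)$.

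For the concave term I would invoke the continuity of the embedding $E_{1}\hookrightarrow L^{q(x)}(\Omega)$ provided by Lemma~\ref{com}, which applies because $(Q)$ forces $q^{+}<Np^{-}/(N+sp^{+})$. Thus $|u|_{q(x)}\le c_{e}\|u\|=c_{e}\rho$ for some constant $c_{e}>0$; shrinking $\rho$ so that $c_{e}\rho<1$ and applying Proposition~\ref{pr1}(iii) once more yields $\int_{\Omega}|u|^{q(x)}dx\le (c_{e}\rho)^{q^{-}}$, whence $-\lambda\int_{\Omega}|u(x)|^{q(x)}/q(x)\,dx\ge -C_{2}\lambda\rho^{q^{-}}$ with $C_{2}:=c_{e}^{q^{-}}/q^{-}$.

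Collecting the two estimates on the sphere $\|u\|=\rho$ gives
$$I_{\lambda}(u)\ \ge\ C_{1}\rho^{p^{+}+1}-C_{2}\lambda\rho^{q^{-}}\ =\ \rho^{q^{-}}\big(C_{1}\rho^{\,p^{+}+1-q^{-}}-C_{2}\lambda\big).$$
Here I would fix a single $\rho\in(0,1)$ small enough for the embedding bound above, and then set $\lambda_{0}:=\tfrac{C_{1}}{2C_{2}}\rho^{\,p^{+}+1-q^{-}}$; for every $\lambda\in(0,\lambda_{0})$ the parenthesis exceeds $\tfrac{C_{1}}{2}\rho^{\,p^{+}+1-q^{-}}>0$, so that $I_{\lambda}(u)\ge\alpha:=\tfrac{C_{1}}{2}\rho^{p^{+}+1}>0$, as required. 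That $p^{+}+1-q^{-}>0$ is guaranteed by $(Q)$, since $q^{-}<(p-1)^{-}=p^{-}-1<p^{+}+1$, and this is precisely what keeps the exponent, and hence $\lambda_{0}$, positive. I expect the main obstacle to be the bookkeeping of the four mismatched exponents: one must verify that $p^{+}+1$ is the correct uniform exponent dominating all four modular lower bounds on the sphere, and that Proposition~\ref{pr1}(iii) is applied in the correct direction throughout the small-norm regime $n_{i}<1$.
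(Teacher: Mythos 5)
Your proposal is correct and takes essentially the same route as the paper's own proof: the same modular lower bound $\frac{1}{4^{p^{+}}(p^{+}+1)}\|u\|^{p^{+}+1}$ obtained from Proposition~\ref{pr1}(iii) plus convexity of $t\mapsto t^{p^{+}+1}$, the same embedding estimate from Lemma~\ref{com} for the $q(x)$-term with $\rho<\min(1,1/\beta)$, and even the identical choices $\lambda_{0}=\frac{C_{1}}{2C_{2}}\rho^{p^{+}+1-q^{-}}$ and $\alpha=\frac{C_{1}}{2}\rho^{p^{+}+1}$, which coincide with the paper's constants $\frac{\rho^{p^{+}+1-q^{-}}}{4^{p^{+}}(2p^{+}+2)}\frac{q^{-}}{\beta^{q^{-}}}$ and $\frac{\rho^{p^{+}+1}}{4^{p^{+}}(2p^{+}+2)}$. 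If anything, your version spells out the exponent bookkeeping ($n_{i}^{m_{i}}\geq n_{i}^{p^{+}+1}$ for $n_{i}<1$ and the factor $4^{-p^{+}}$ from Jensen) that the paper leaves implicit in passing to inequality \eqref{n}.
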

\begin{proof}
By Lemma \ref{com}, there exists $\beta >0$ such that
$$\left|u\right|_{r(x)}\leq \beta\left\|u\right\|, \ \ \mbox{for every} \ \ u\in E_{1} \ \ \mbox{and} \ \ r^{+}\in (1,
\frac{Np^{-}}{N+sp^{+}}).$$ 

We fix $\rho \in (0,
\min(1,\frac{1}{\beta}))$. Invoking Proposition \ref{pr1}, for every
$u\in E_{1} \ \ \mbox{with} \ \ \|u\|=\rho$, we can get
$$|u|_{q(x)}<1.$$

Combining the above relations and Proposition \ref{pr1}, for any
$u\in E_{1}$ with $\|u\|=\rho$, we can then deduce that
\begin{equation}\label{n}
\begin{array}{ll}
I_{\lambda}(u)&\displaystyle\geq  \frac{1}{p^{+}}\left(\displaystyle
\int_{\Omega} B(x) |\nabla u(x)|^{p(x)}dx+\displaystyle
\int_{\Omega}
A(x)|u(x)|^{p(x)}dx\right)\\
&\displaystyle + \frac{1}{p^{+}+1}\displaystyle \int_{\Omega}
D(x) | u(x)|^{p(x)+1}dx\\
&\displaystyle +\frac{1}{p^{+}-1} \displaystyle
\int_{\Omega}C(x)|u(x)|^{p(x)-1}dx-
\frac{\lambda}{q^{-}}\displaystyle \int_{\Omega}
|u(x)|^{q(x)}dx
 \\
&\displaystyle\geq \frac{1}{4^{p^{+}}(p^{+}+1)}
\|u\|^{p^{+}+1}-\lambda \frac{\beta^{q^{-}}}{q^{-}}\|u\|^{q^{-}}\\&
\geq \displaystyle \frac{1}{4^{p^{+}}(p^{+}+1)}
\rho^{p^{+}+1}-\lambda \frac{\beta^{q^{-}}}{q^{-}}\rho^{q^{-}}\\
&=\rho^{q^{-}}(
\frac{1}{4^{p^{+}}(p^{+}+1)}\rho^{p^{+}+1-q^{-}}-\lambda
\frac{\beta^{q^{-}}}{q^{-}}).
\end{array}
\end{equation}

Put
$\lambda_{0}=\frac{\rho^{p^{+}+1-q^{-}}}{4^{p^{+}}(2p^{+}+2)}\frac{{q^{-}}}{\beta^{q^{-}}}$.
It now follows from \eqref{n} that  for any $\lambda
\in(0,\lambda_{0})$,
$$I_{\lambda} (u)\geq \alpha \ \ \mbox{with} \ \ \|u\|=\rho,$$

and $\alpha=\frac{\rho^{p^{+}+1}}{4^{p^{+}}(2p^{+}+2)}>0.$ This
completes the proof of Lemma~\ref{g1}.
\end{proof}

\begin{lemma}\label{g2}
Suppose that all hypotheses of Theorem \ref{mainval} are satisfied. Then
there exists $\varphi\in E_{1}$ such that $\varphi> 0$ and
$I_{\lambda}(t\varphi)<0$, for small enough $t$.
\end{lemma}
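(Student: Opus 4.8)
The plan is to exhibit a single, carefully localized test function and exploit hypothesis $(Q)$, which forces $q^- < p^- - 1$. This inequality is the engine of the proof: it guarantees that the perturbation term $-\lambda\int_\Omega |u|^{q(x)}/q(x)\,dx$ carries, on a suitable region, a lower power of the scaling parameter than any of the remaining (nonnegative) terms of $I_\lambda$, hence dominates them as the parameter tends to $0$.

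First I would localize. Since $q$ is continuous on $\overline\Omega$ and $\min_{\overline\Omega} q(x) < \min_{\overline\Omega}(p(x)-1) = p^- - 1$ by $(Q)$, the set $U := \{x\in\Omega : q(x) < p^- - 1\}$ is open and nonempty. I then choose $\varphi \in C_c^1(\Omega)$ nonnegative and nontrivial with $\mathrm{supp}\,\varphi \subset U$; by definition of $E_1$ we have $\varphi \in E_1$, and by compactness of $\mathrm{supp}\,\varphi$ the quantity $q_0 := \max_{\mathrm{supp}\,\varphi} q(x)$ satisfies $q_0 < p^- - 1$. Observe that the potentials $A,B,C,D$ of \eqref{ABCD} are bounded on $\overline\Omega$ (the map $a$ is $C^1$, and the lower bound $|a(x)|\geq|x-x_0|^s$ near $x_0$ only makes $|a|$ small, preventing any blow-up), so $K_1 := T(\varphi)$ is a finite nonnegative constant while $K_2 := \int_\Omega |\varphi|^{q(x)}\,dx > 0$.

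Next, for $t\in(0,1)$ I estimate the two pieces of $I_\lambda(t\varphi)$ separately. Each nonnegative term involves a power $t^{p(x)}$, $t^{p(x)-1}$ or $t^{p(x)+1}$ whose exponent is at least $p^- - 1$; since $t<1$ this yields (sum of nonnegative terms) $\leq t^{\,p^- -1}K_1$. For the perturbation term, on $\mathrm{supp}\,\varphi$ one has $q(x)\leq q_0$, whence $t^{q(x)}\geq t^{q_0}$ and $\lambda\int_\Omega \frac{t^{q(x)}}{q(x)}|\varphi|^{q(x)}\,dx \geq \frac{\lambda}{q^+}t^{q_0}K_2$. Combining the two bounds gives $I_\lambda(t\varphi) \leq t^{q_0}\bigl(K_1\, t^{\,p^- - 1 - q_0} - \tfrac{\lambda}{q^+}K_2\bigr)$. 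Because $p^- - 1 - q_0 > 0$, the bracket tends to $-\tfrac{\lambda}{q^+}K_2 < 0$ as $t\to 0^+$, so $I_\lambda(t\varphi)<0$ for all sufficiently small $t>0$, which is exactly the assertion.

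The subtle point, and the step I expect to be the main obstacle, is the exponent bookkeeping rather than any single inequality. A naive global estimate that bounds the perturbation from below by a multiple of $t^{q^+}$ fails, since $(Q)$ places $q^+$ strictly above $p^- - 1$, making $t^{q^+}$ negligible compared with $t^{\,p^- - 1}$ as $t\to 0^+$. The localization of $\mathrm{supp}\,\varphi$ inside $U$ is precisely what replaces the harmful exponent $q^+$ by some $q_0 < p^- - 1$, restoring the correct dominance. Thus the only facts needing genuine care are that $U$ is nonempty (immediate from $(Q)$), that $\varphi$ may be taken with support inside $U$, and that the boundedness of $A,B,C,D$ keeps $K_1$ finite; once these are in place, the remaining inequalities are routine consequences of $t<1$.
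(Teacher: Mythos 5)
Your proposal is correct and takes essentially the same route as the paper: both localize to a subregion where $q(x)<p^{-}-1$ (your open set $U$; the paper's $\Omega_{0}$, on which $q(x)<q^{-}+\epsilon_{0}<p^{-}-1$), pick a compactly supported bump function, bound all nonnegative terms by a constant times $t^{\,p^{-}-1}$, and bound the perturbation from below by a constant times $t^{q_{0}}$ (the paper's $t^{\,q^{-}+\epsilon_{0}}$), so that the strictly smaller exponent wins as $t\to 0^{+}$. If anything, your choice of placing all of $\mathrm{supp}\,\varphi$ inside $U$ is slightly tidier than the paper's ($\varphi=1$ on $\overline{\Omega_{0}}$ with possibly larger support), since the paper's bound $t^{q(x)}\geq t^{\,q^{-}+\epsilon_{0}}$ is literally justified only after restricting the last integral in \eqref{e2} to $\Omega_{0}$ --- a restriction your localization renders automatic.
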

\begin{proof}
By virtue of hypotheses $(P)$ and $(Q)$, there exist
$\epsilon_{0}>0$ and $\Omega_{0}\subset \Omega$ such that
\begin{equation}\label{e1}
q(x)<q^{-}+\epsilon_{0}<p^{-}-1, \ \ \mbox{for every } \ \ x\in
\Omega_{0}.
\end{equation}

 Let $\varphi \in C^{\infty}_{0}(\Omega)$
such that $\overline{\Omega_{0}}\subset \mbox{supp}(\varphi)$, $\varphi=1$
for every  $x \in \overline{\Omega_{0}}$ and $ 0 \leq \varphi \leq 1$
in $\Omega$. It then follows that for $t\in (0,1)$,
\begin{align}\label{e2}
I_{\lambda}(t\varphi)&=\displaystyle \int_{\Omega}
\frac{t^{p(x)}B(x)}{p(x)} |\nabla \varphi(x)|^{p(x)}dx+
\displaystyle \int_{\Omega}
\frac{t^{p(x)}A(x)}{p(x)}|\varphi(x)|^{p(x)}dx+\displaystyle
\int_{\Omega}\frac{t^{p(x)-1}C(x)}{p(x)-1}|\varphi|^{p(x)-1}dx
\nonumber\\&+\displaystyle \int_{\Omega}
\frac{t^{p(x)+1}D(x)}{p(x)+1}|\varphi(x)|^{p(x)+1}dx
-\lambda \displaystyle \int_{\Omega}t^{q(x)}\frac{ |\varphi(x)|^{q(x)}}{q(x)}dx \nonumber\\
&\leq \frac{t^{p^{-}-1}}{p^{-}-1}(\displaystyle \int_{\Omega}
\frac{B(x)}{p(x)} |\nabla \varphi(x)|^{p(x)}dx+ \displaystyle
\int_{\Omega} \frac{A(x)}{p(x)}|\varphi(x)|^{p(x)}dx+\displaystyle
\int_{\Omega}\frac{C(x)}{p(x)-1}|\varphi|^{p(x)-1}dx \nonumber
\\&+\displaystyle \int_{\Omega}
\frac{D(x)}{p(x)+1}|\varphi(x)|^{p(x)+1}dx)
-\lambda t^{q^{-}+\epsilon_{0}} \displaystyle \int_{\Omega}\frac{
|\varphi(x)|^{q(x)}}{q(x)}dx.
\end{align}

Combining \eqref{e1} and \eqref{e2}, we finally arrive at the
desired conclusion. 

This
completes the proof of Lemma~\ref{g2}.
\end{proof}

 \section{Proof of Theorem~\ref{mainval}}
 
In the last section we shall prove the main theorem of this paper.

Let $\lambda_{0}$ be defined as in Lemma \ref{g1} and choose any
$\lambda \in (0,\lambda_{0})$. 

Again, invoking Lemma \ref{g1}, we
can deduce that
\begin{equation}\label{eq3}
\displaystyle \inf_{u\in \partial B(0,\rho)}I_{\lambda}(u)>0.
\end{equation}

On the other hand, by Lemma \ref{g2}, there exists $\varphi\in
E_{1}$ such that 
$$I_{\lambda}(t\varphi)<0 \ \ \rm{ for \ every \  small \ enough} \ 
t>0.$$ 

Moreover, by Proposition \ref{pr1}, when $\|u\|<\rho$, we
have
$$I_{\lambda}(u)\geq \frac{1}{4^{p^{+}}(p^{+}+1)}\|u\|^{p^{+}+1}-c \|u\|^{q^{-}},$$

where $c$ is a positive constant. It follows that
$$-\infty<m=\displaystyle \inf_{u\in B(0,\rho)}I_{\lambda}(u)<0.$$

Applying Ekeland's variational principle to the functional
$$I_{\lambda}: B(0,\rho)\rightarrow \mathbb{R},$$

 we can find a (PS)
sequence $(u_{n})\in B(0,\rho)$, that is,
$$I_{\lambda}(u_{n})\rightarrow m \ \ \mbox{and}\ \ I_{\lambda}^{'}(u_{n})\rightarrow 0.$$

It is clear that $(u_{n})$ is bounded in $E_{1}$. Thus there exists
$u\in E_{1}$ such that, up to a subsequence,
$$(u_{n})
\rightharpoonup u \ \  \rm{in} \ \  E_{1}.$$ 

Using Theorem \ref{com}, we see that
$(u_{n})$ strongly converges to $u$ in $L^{q(x)}(\Omega)$. 

So, by
the H\"older inequality and Proposition \ref{pr2}, we can obtain the
following
$$\displaystyle \lim_{n\rightarrow  +\infty}\displaystyle \int_{\Omega}|u_{n}|^{q(x)-2}u_{n}(u_{n}-u)dx=
\displaystyle \lim_{n\rightarrow  +\infty}\displaystyle
\int_{\Omega}|u|^{q(x)-2}u(u_{n}-u)dx=0.$$

 On the other hand, since
$(u_{n})$ is a (PS) sequence, we can also infer that
$$\lim_{n\rightarrow  +\infty}\langle I^{'}_{\lambda}(u_{n})- I^{'}_{\lambda}(u), u_{n}-u\rangle=0.$$

Combining the above pieces of information with Lemma \ref{sss}, we
can now conclude that 
$$u_{n}\rightarrow u \ \  \rm{in} \ \  E_{1}.$$ 

Therefore 
$$I_{\lambda}(u)=m<0 \ \ \mbox{and} \ \ I_{\lambda}^{'}(u)=0.$$

We have thus shown that $u$ is a nontrivial weak solution for
problem \eqref{val} and that every $\lambda \in (0,\lambda_{0})$ is
an eigenvalue of problem \eqref{val}. 

This completes the proof of Theorem~\ref{mainval}.  \qed\\

\section*{Acknowledgements} 
The second author was supported by the Slovenian Research Agency grants P1-0292, J1-7025, J1-8131,
N1-0064,  N1-0083, and N1-0114. We thank the referee for comments and suggestions.

\vfill\eject

\end{document}